\date{} 
\begin{document} 



\centerline{}
\centerline{}
\centerline{\Large{\bf Laplace Transforms of Improper Random Variables}} 

\centerline{} 

\centerline{\bf {Janhavi PRABHU}} 

\centerline{} 

\centerline{Department of Mathematics}
 
\centerline{Birla Institute of Technology and Science} 

\centerline{Pilani, Goa Campus, India} 

\centerline{} 

\centerline{\bf {Myron HLYNKA}} 

\centerline{} 

\centerline{Department of Mathematics and Statistics} 

\centerline{University of Windsor} 

\centerline{Windsor, Ontario, Canada N9B 3P4} 

\newtheorem{Theorem}{\quad Theorem}[section] 

\newtheorem{Definition}[Theorem]{\quad Definition} 

\newtheorem{Corollary}[Theorem]{\quad Corollary} 

\newtheorem{Lemma}[Theorem]{\quad Lemma} 

\newtheorem{Example}[Theorem]{\quad Example} 

\centerline{}
{\footnotesize This article is distributed under the Creative Commons by-nc-nd Attribution License. Copyright $\copyright$ 20xx Hikari Ltd.}

\begin{abstract}
The probabilistic interpretation of Laplace transforms is used to help to describe the Laplace Transform $L(s)$ of improper random variables. In particular, busy periods in queueing models are examined. The value of $L(0)$ is explained in certain special cases. 
\end{abstract}

{\bf Mathematics Subject Classification (2020):} 60K25, 44A10 \\

{\bf Keywords:} 
queueing, Laplace Transform, improper random variables, busy periods

\section{Introduction}
A typical Laplace Transform $L(s)$ for a non-negative continuous proper random variable (X) probability density function ($f(x)$) satisfies $$L_X(s)=\int_0^{\infty}f(x)e^{-sx}dx.$$
Hence $L_X(0)= 1$ for proper pdf's, and $lim_{s\rightarrow \infty}L(s)=0$ and $L(s)$ is a non-increasing function of $s$. 

The probabilistic interpretation of Laplace Transforms was first studied by van Dantzig (\cite{6}). Further explanations appear in Runnenberg (\cite{5}), in 
Kleinrock (\cite{2}), and Roy (\cite{4}). This probabilistic interpretation can be expressed as follows. 
\begin{Theorem} Let $X$ be a continuous random variable with non-negative support having pdf $f(x)$. Let $Y$ be a 
``catastrophe'' random variable independent of $X$, which has an exponential distribution with rate $s$. Then 
$$L_X(s)=P(X<Y).$$
\end{Theorem}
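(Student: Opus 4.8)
The plan is to compute the probability $P(X < Y)$ directly by conditioning on the value of $X$ and then recognizing the resulting integral as the Laplace transform. First I would record the survival function of the catastrophe variable: since $Y$ is exponential with rate $s$, we have $P(Y > x) = e^{-sx}$ for every $x \geq 0$. This is the ingredient that will produce the kernel $e^{-sx}$ appearing in the definition of $L_X(s)$.

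Next I would express $P(X < Y)$ as an integral against the density of $X$. Using the law of total probability and conditioning on $X = x$, $$P(X < Y) = \int_0^\infty P(X < Y \mid X = x)\, f(x)\, dx = \int_0^\infty P(Y > x)\, f(x)\, dx,$$ where the second equality uses the independence of $X$ and $Y$: given $X = x$, the event $\{X < Y\}$ reduces to $\{Y > x\}$, whose probability depends only on the distribution of $Y$. Substituting the exponential survival function then yields $$P(X < Y) = \int_0^\infty e^{-sx}\, f(x)\, dx = L_X(s),$$ which is precisely the claimed identity.

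Because the argument is essentially a one-line computation, I do not expect a deep obstacle. The only point requiring care is the justification for replacing the conditional probability $P(X < Y \mid X = x)$ by $P(Y > x)$, and this is exactly where the independence hypothesis is used in an essential way. I would make it rigorous either through the standard conditioning step above, or, if one prefers to avoid conditional probabilities, by factoring the joint density as $f_{X,Y}(x,y) = f(x)\, s\, e^{-sy}$ and applying Fubini's theorem to the double integral of this density over the region $\{\, (x,y) : 0 \le x < y \,\}$. Either route is routine given the stated hypotheses, and both recover the same integral representation of $L_X(s)$.
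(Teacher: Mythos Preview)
Your argument is correct and is essentially the same as the paper's: both identify the exponential survival function $P(Y>x)=e^{-sx}$ and then match $\int_0^\infty f(x)e^{-sx}\,dx$ with $\int_0^\infty f(x)P(Y>x)\,dx=P(X<Y)$. The only cosmetic difference is direction---the paper starts from $L_X(s)$ and ends at $P(X<Y)$, whereas you start from $P(X<Y)$ and make the conditioning step more explicit.
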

\begin{proof}
	Since $F_Y(y)=1-e^{-ys}=P(Y\leq y)$, we have $e^{-sy}=P(Y>y)$ so  
$$L_X(s)=\int_0^{\infty}f(x)e^{-sx}dx=\int_0^{\infty}f(x)P(Y>x)dx=P(X<Y)$$
\end{proof}

An improper non negative continuous random variable with pdf $f(x)$ is such that $\int_0^{\infty}f(x) dx=1-k$ and
$P(X=\infty)=k$, for some $k\in (0,1)$. See Mynbaev (\cite{3}), for example.

In this paper, we are interested in looking at improper random variables and interpreting the Laplace transform using the probabilistic interpretation. This will be done by considering busy periods in an M/M/1 queueing system. We show the effect of the stability conditions on the appearance of the Laplace transform. Our analysis will also use branching process methods. 

\section{Analysis of Busy Periods of M/M/1 queueing system}

Consider a stable $M/M/1$ queueing system with $\lambda<\mu$ where $\lambda$ is the arrival rate and $\mu$ is the service rate. The Laplace Transform of the busy period is known and we give a probabilistic interpretation below. 

\begin{Theorem}For an $M/M/1$ system with busy period length $B$, 
	$$L_B(s)= \dfrac{\mu+\lambda +s - \sqrt{(\mu+\lambda+s)^2-4\lambda \mu}}{2 \lambda}.$$
	\end{Theorem}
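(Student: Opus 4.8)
The plan is to derive the Laplace transform of the busy period $B$ for the M/M/1 queue by exploiting the recursive (self-similar) structure of busy periods, which is exactly the branching-process idea hinted at in the introduction. First I would condition on the first event after the busy period begins. A busy period starts when a single customer arrives to an empty system. Two competing exponential clocks then run: service completion at rate $\mu$ and a new arrival at rate $\lambda$. With probability $\mu/(\mu+\lambda)$ the service finishes first, ending the busy period immediately; with probability $\lambda/(\mu+\lambda)$ a new arrival comes first, and the initial customer's remaining service plus the sub-busy-period generated by the newcomer must be completed before the original customer can finish. The time to the first event is itself exponential with rate $\mu+\lambda$.

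Next I would translate this decomposition into an equation for $L_B(s) = E[e^{-sB}]$. Let $T \sim \mathrm{Exp}(\mu+\lambda)$ denote the time to the first event, with $E[e^{-sT}] = (\mu+\lambda)/(\mu+\lambda+s)$. Conditioning on which event occurs first, and using independence together with the memoryless property of the exponential service (so that the original customer's remaining service after the interruption is again $\mathrm{Exp}(\mu)$), the busy period satisfies $B = T + \mathbf{1}\{\text{arrival first}\}(B_1 + B_2)$ in distribution, where $B_1$ and $B_2$ are independent copies of $B$ (one for the sub-busy-period started by the new arrival, one for the residual busy period of the original customer). This yields the functional equation
\[
L_B(s) = \frac{\mu+\lambda}{\mu+\lambda+s}\left(\frac{\mu}{\mu+\lambda} + \frac{\lambda}{\mu+\lambda}\,L_B(s)^2\right),
\]
which simplifies to the quadratic $\lambda\, L_B(s)^2 - (\mu+\lambda+s)\,L_B(s) + \mu = 0$.

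The final step is to solve this quadratic for $L_B(s)$, giving
\[
L_B(s) = \frac{\mu+\lambda+s \pm \sqrt{(\mu+\lambda+s)^2 - 4\lambda\mu}}{2\lambda},
\]
and then to select the correct root. Here lies the one genuinely delicate point of the argument: both roots solve the quadratic, so I must justify discarding the plus sign. I would argue this on analytic grounds—a Laplace transform of a (possibly improper) nonnegative random variable must satisfy $|L_B(s)| \le 1$ for $s \ge 0$ and must stay bounded as $s \to \infty$. The plus-sign root blows up like $(\mu+\lambda+s)/\lambda \to \infty$, whereas the minus-sign root tends to $0$ as $s \to \infty$, matching the required behavior; hence only the minus root is admissible, yielding the stated formula. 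I expect this root-selection step to be the main obstacle, since it is the only place where a purely algebraic manipulation is insufficient and one must invoke the probabilistic or analytic constraints on $L_B(s)$ to pin down the physically meaningful solution.
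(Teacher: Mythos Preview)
Your argument is correct and leads to the identical quadratic equation and the same formula as the paper. The underlying idea---first-step analysis exploiting the self-similarity $L_2(s)=L_B(s)^2$---is exactly the paper's. The one cosmetic difference is in how the factor $\mu+\lambda+s$ enters: you compute $E[e^{-sT}]=(\mu+\lambda)/(\mu+\lambda+s)$ for the first-event time $T\sim\mathrm{Exp}(\mu+\lambda)$ and then split into two outcomes, whereas the paper invokes the ``catastrophe'' interpretation $L_B(s)=P(B<Y)$ from Theorem~1.1 and treats $s$ as the rate of a third competing exponential clock, so the three-way race gives $\mu/(\mu+\lambda+s)$ and $\lambda/(\mu+\lambda+s)$ directly. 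Your route is the textbook derivation; the paper's is tailored to its theme of probabilistic interpretation of the transform variable $s$. Your root-selection argument via the $s\to\infty$ behaviour is in fact more explicit than the paper's one-line ``keep the root in $[0,1]$.''
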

\begin{proof}
Let $L_i(s)$ be the time to empty a queueing system if the current load consists of $i$ customers. A busy period begins with a single customer so $L_B(s)=L_1(s)$.
Let $Y$ be a catastrophe random variable at rate $s$. Now the next event could be an arrival (with probability $\dfrac{\lambda}{\lambda+\mu +s}$) or a completion (with probability $\dfrac{\mu}{\mu+\lambda +s}$) or a catastrophe. If the next event is a service completion, that would end the busy period. If the next event is an arrival, then there would be 2 customers and each of those would have to drop a level in order to complete the busy period. Then\\
\begin{align*}
	L_B(s)&=P(B<Y)=P(\text{next event is service completion})\\
	&\qquad +P(\text{next event is arrival})P(\text{system empties from 2 customers})\\
	&=\dfrac{\mu}{\mu+\lambda +s}+\dfrac{\lambda}{\mu+\lambda +s}L_2(s)
\end{align*}\
 But $L_2(s)=L_1(s)^2$ since we need to drop from 2 customers to 1 customer and then 1 customer to zero customers and these are distributionally the same. 
 Thus 
 $$L_B(s)=\dfrac{\mu}{\mu+\lambda +s}+\dfrac{\lambda}{\mu+\lambda +s}L_B(s)^2$$
 This is a quadratic and solving yields our result (by keeping the root that is in the interval [0,1]). 

	\end{proof}

\begin{Example}
First we look at the graph of $L_B(s)$ when the system is stable. Take $\lambda=3 $ and $\mu=4$. 
Then $L(s)= \dfrac{7+s-\sqrt{7+s)^2-48}}{6}$ 
 See Figure 1.
\begin{figure}[htb]
   \centering
   \includegraphics[scale=0.6]{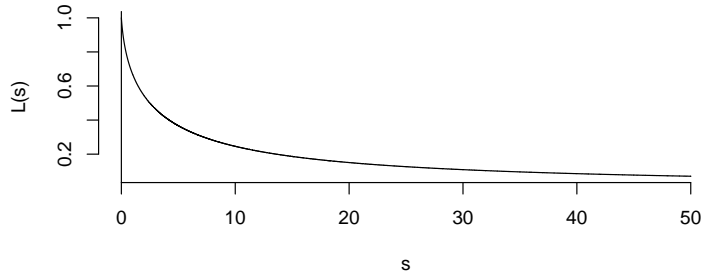}
\caption{LT of busy period (stable case) }
\end{figure}
\end{Example}

\begin{Example}
Next, we look at an unstable $M/M/1$ queueing system with $\lambda=4$ and $\mu=3$ (i,.e. the arrival rate is greater than the service rate). We can still use the LT expression. Here $L(s)=\dfrac{7+s-\sqrt{7+s)^2-48}}{8}$ 
The graph now takes the following form.

 See Figure 2.
\begin{figure}[htb]
	\centering
	\includegraphics[scale=0.6]{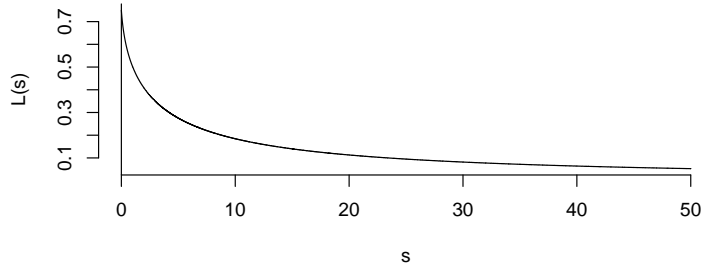}
	\caption{LT of busy period (unstable case) }
\end{figure}
\end{Example}
 In our example, we see that $L(0)=.75$. What does this mean? 
  Fortunately, our probabilistic interpretation of $L_B(s)$ gives us this information.

\begin{Theorem}
	If $L(0)=k$, $k<1$ for an improper non-negative random variable representing a busy period $B$ in a queueing system, then $k$ represents the probability that the busy period will end before the system count enters a path which never returns to 0.  
	\end{Theorem}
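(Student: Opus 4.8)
The plan is to bridge the analytic value $L(0)$ and the stated probabilistic quantity in two moves: first identify $L(0)$ with $P(B<\infty)$, and then identify $P(B<\infty)$ with the probability that the queue count returns to $0$, i.e. that the busy period actually terminates. For the first move I would evaluate the transform at $s=0$. Since $L_B(s)=\int_0^\infty f(x)e^{-sx}\,dx$ and, for $s>0$, the atom of mass at $+\infty$ contributes $e^{-s\cdot\infty}=0$, setting $s=0$ (or letting $s\to 0^+$ and invoking monotone convergence as $e^{-sx}\uparrow 1$) gives $L_B(0)=\int_0^\infty f(x)\,dx=P(B<\infty)$. The same conclusion follows from the catastrophe picture of Theorem~1.1: as the catastrophe rate $s\to 0^+$ the catastrophe time $Y\to\infty$, so $L_B(s)=P(B<Y)\to P(B<\infty)$. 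Hence $k=L(0)=P(B<\infty)$.

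For the second move I would argue that $\{B<\infty\}$ is precisely the event that the system count ever returns to $0$. By definition the busy period runs from the first arrival until the queue next empties, so it terminates exactly when the count hits $0$ and fails to terminate exactly on those sample paths along which the count never returns to $0$ (the queue drifting off to infinity). Therefore $P(B<\infty)$ is the probability that the busy period ends rather than the system following a path that never returns to $0$, which is the assertion.

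To make this quantitative and to link it to the branching-process methods promised in the introduction, I would model the busy period as a Galton--Watson process in which each customer's service ``produces'' the customers who arrive during that service. Because service is $\mathrm{Exp}(\mu)$ and arrivals are Poisson($\lambda$), the offspring count is geometric with generating function $G(z)=\mu/(\lambda+\mu-\lambda z)$ and mean $\rho=\lambda/\mu$. The busy period ends iff this process goes extinct, and classical theory gives the extinction probability as the smallest root in $[0,1]$ of $q=G(q)$, equivalently of $\lambda q^2-(\lambda+\mu)q+\mu=0$. This is exactly the quadratic of Theorem~2.1 at $s=0$, whose relevant root is $q=\min(1,\mu/\lambda)$; in the unstable regime $\lambda>\mu$ it equals $\mu/\lambda<1$, matching the value $L(0)=0.75$ seen in the example.

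The main obstacle is interpretive rather than computational. One must justify that $L(0)$ genuinely equals $P(B<\infty)$ (the limit--integral exchange, or equivalently the degeneration $Y\to\infty$), argue cleanly that ``busy period infinite'' and ``count never returns to $0$'' describe the same event, and explain why the extinction probability is the smallest root of the fixed-point equation rather than the spurious root $q=1$ --- this is exactly the point at which the improperness $k<1$ becomes visible.
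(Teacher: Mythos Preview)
Your core argument --- letting $s\to 0^+$ in the catastrophe interpretation so that $Y\to\infty$ and $L_B(s)=P(B<Y)\to P(B<\infty)$, then identifying $\{B<\infty\}$ with the event that the system count returns to $0$ --- is exactly the paper's proof, though you supply more justification (the monotone-convergence step) than the paper bothers to give. Your third paragraph goes beyond what this theorem requires: the paper defers the explicit computation of $k$ to its next result, where it also uses a branching-process argument, but with a different offspring law --- each individual produces $0$ or $2$ offspring with probabilities $\mu/(\lambda+\mu)$ and $\lambda/(\lambda+\mu)$, tracking the next \emph{event} rather than the arrivals during a full service --- which nonetheless yields the same quadratic $\lambda q^{2}-(\lambda+\mu)q+\mu=0$ and hence the same extinction probability $q=\mu/\lambda$.
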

\begin{proof}
	Since $B$ is an improper random variable, it is possible that $B$ will take the value $\infty$ (i.e. busy period will never end). Also $k=lim_{s\rightarrow 0^+}L(s) =P(B<Y)$ where $Y$ is exponential at rate $0^+$. So $k$ represents the probability that the busy period will end (and be finite) at least once before the system number increases and never returns to 0. 
		\end{proof}

\begin{Theorem} For an $M/M/1$ queueing system, with arrival rate $\lambda$ and service rate $\mu$, satisfying $\lambda>\mu$, the probability that a busy period ends before the system count never returns to zero is $\dfrac{\mu}{\lambda}$
\end{Theorem}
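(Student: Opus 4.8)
The plan is to reduce the statement to a one-line evaluation of the busy-period transform at $s=0$, using the two preceding theorems. The penultimate theorem already tells us that, for an improper busy period $B$, the number $k=\lim_{s\to 0^+}L_B(s)=L_B(0)$ is \emph{exactly} the probability that $B$ is finite before the system count leaves on a path that never returns to $0$ — which is the quantity we are asked to find. So it suffices to compute $L_B(0)$ from the closed form established above,
$$L_B(s)=\frac{\mu+\lambda+s-\sqrt{(\mu+\lambda+s)^2-4\lambda\mu}}{2\lambda}.$$

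First I would simplify the radicand at $s=0$. The key algebraic observation is that the discriminant is a perfect square, $(\mu+\lambda)^2-4\lambda\mu=(\lambda-\mu)^2$. Since we are in the unstable regime $\lambda>\mu$, the principal (nonnegative) square root is $\lambda-\mu$, so the minus-sign branch of the formula yields
$$L_B(0)=\frac{\mu+\lambda-(\lambda-\mu)}{2\lambda}=\frac{2\mu}{2\lambda}=\frac{\mu}{\lambda}.$$
It is worth noting that the other root of the underlying quadratic is $1$; the minus branch automatically selects the root lying in $[0,1]$, consistent with the convention used when that quadratic was solved. In the stable case $\lambda<\mu$ the selected root would instead be $1$, matching $L_B(0)=1$ for a proper busy period.

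As a check reflecting the branching-process methods promised in the introduction, I would also recover $\mu/\lambda$ directly from the embedded jump chain of the system count, which moves up by one with probability $\lambda/(\lambda+\mu)$ and down by one with probability $\mu/(\lambda+\mu)$. A busy period starts at count $1$ and ends at the first return to $0$, so the required probability is the return probability of this nearest-neighbour walk from state $1$; the standard extinction / gambler's-ruin computation for an upward-drifting walk ($\lambda>\mu$) gives $\bigl(\mu/(\lambda+\mu)\bigr)\big/\bigl(\lambda/(\lambda+\mu)\bigr)=\mu/\lambda$, in agreement with the transform evaluation.

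The main obstacle here is conceptual rather than computational: the whole argument rests on the claim that the probabilistic interpretation of the Laplace transform, namely $L(0)=\lim_{s\to 0^+}P(B<Y)$ with $Y$ exponential at rate $0^+$, transfers faithfully to the improper setting in which $P(B=\infty)>0$. Granting the preceding theorem, that transfer is already justified, and what remains is only the short simplification of the square root together with the correct sign choice $\sqrt{(\lambda-\mu)^2}=\lambda-\mu$.
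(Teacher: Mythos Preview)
Your argument is correct, but it is not the paper's. The paper proves this theorem \emph{independently} of the Laplace-transform formula, by setting up a Galton--Watson branching process: each customer's ``offspring'' is $2$ with probability $\lambda/(\lambda+\mu)$ (an arrival plus the original) and $0$ with probability $\mu/(\lambda+\mu)$ (a service completion), so the offspring generating function is $\beta(z)=\dfrac{\mu}{\lambda+\mu}+\dfrac{\lambda}{\lambda+\mu}z^{2}$, and the Fundamental Theorem of Branching Processes identifies the extinction probability as the smallest positive root of $\alpha=\beta(\alpha)$, namely $\mu/\lambda$. Your route instead treats the closed form for $L_B(s)$ and the interpretation $L_B(0)=P(B<\infty)$ as given and just evaluates; the paper then does exactly that computation in the subsequent Example as a \emph{check} that the two answers agree. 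So you have essentially inverted the roles of proof and verification: your method is quicker once Theorems~2.1 and~2.4 are in hand, while the paper's branching-process derivation is self-contained and supplies the promised independent confirmation that the probabilistic reading of $L_B(0)$ is correct. Your secondary gambler's-ruin remark is, at the equation level, the same fixed-point quadratic the paper solves, just framed as a nearest-neighbour walk rather than a branching process.
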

\begin{proof}
For a discussion of branching processes, see Grinstead and Snell (\cite{1}).
For each customer, define the next generation to consist of 2 items with probability $\beta_2=\dfrac{\lambda}{\lambda + \mu}$ (an arrival plus the original item) and 0 items with probability $\beta_0=\dfrac{\mu}{\lambda+\mu}$. The probability generating function for the next generation is $\beta(z)=\beta_0+\beta_2 z^2$. Extinction would mean that the system reaches 0 at some time. By the Fundamental Theorem of Branching Processes, the probability of extinction $\alpha$ is the smallest positive real root of $\alpha=\beta(\alpha)$. This is a quadratic in $\alpha$ and we get our result. 
\end{proof}

\begin{Example}
	For an $M/M/1$ queueing system, with $\lambda=4$ and $\mu=3$, we previously obtained a Laplace Transform 
	$L(s)=\dfrac{7+s-\sqrt{7+s)^2-48}}{8}$ and computed $L(0)=6/8=.75$. Our theorem above gives the probability of extinction as $\mu/\lambda=3/4=.75$ so our results match. 
\end{Example}

\section{Conclusions}

In this paper, we have explained the value of $L(0)$ for the Laplace Transform of an improper continuous random variable with positive support. A good example of such a random variable is the length of a busy period in a queueing system with arrival rate exceeding service rate. 

If we had a complex queueing network with retrials and reneging and balking and switching of servers, for example, it may still be possible to find the Laplace Transform in terms of the parameters. In some cases, it may be that $L(0)<1$. Fixing all but one of the parameters, we can adjust the single parameter until $L(0)=1$ holds (if possible). The value of the parameter where this change occurs must be on the boundary of stability of the system. So our study of $L(0)$ gives information about the boundary conditions, which is very important.   

One final comment about improper Laplace non negative random variables is that they will have an infinite expected value since there is a positive probability that the random variable will take the value $\infty$. 
 \centerline{}  
 \centerline{} 
{\bf Acknowledgements.} We acknowledge funding and support  from MITACS Global Internship program.

{\bf Received: Month xx, 20xx}

\begin{thebibliography}{99}






\bibitem{1}
{G.M. Grinstead and J.L. Snell, \em Introduction to Probability: Second Edition,} AMS Press, 1997.  

\bibitem{2}
{L. Kleinrock, \em Queueing Systems Volume 1,} John Wiley and Sons, 1975. 

\bibitem{3}
{K.T. Mynbaev,  Distributions Escaping to Infinity and the Limiting Power of the Cliff-Ord Test for Autocorrelation, \em International Scholarly Research Notices}, vol. 2012, Article ID 926164, 39 pages, (2012) \\ https://doi.org/10.5402/2012/926164

\bibitem{4}
{K.A. Roy, \em Laplace transforms, probabilities and queues.} MSc thesis,
Department of Math \& Stat, University of Windsor, 1997.

\bibitem{5}
{J.T. Runnenberg, \em On the use of collective marks in queueing theory. W. L. Smith and W. E. Wilkinson. editors. Congestion Theory. pages
399-438} University of North Carolina Press. Chape1 Hill. 1965. 

\bibitem{6}
{D. van Dantzig, Sur la methode des functiones generatrices. \em Colloques internationaux du CNRS}, 13: 29-45, (1949) 

\end{thebibliography}
\end{document}